\documentclass[12pt,a4paper]{amsart}
\usepackage{geometry}\geometry{left=2.4cm,right=2.4cm,top=3.0cm,bottom=2.4cm}
\usepackage{amsmath,amssymb,amsfonts,amscd,amsthm,wasysym,color,enumitem,indentfirst,graphicx,booktabs}
\usepackage[bookmarksnumbered,colorlinks,linktocpage,plainpages]{hyperref}
\hypersetup{pdfstartview={FitH}}
\usepackage{mathtools}

\numberwithin{equation}{section}
\newtheorem{theorem}{Theorem}[section]

\newtheorem{lemma}[theorem]{Lemma}
\newtheorem{proposition}[theorem]{Proposition}

\theoremstyle{definition}
\newtheorem{definition}[theorem]{Definition}

\newtheorem{remark}[theorem]{Remark}

\usepackage{tikz}

\allowdisplaybreaks[4]

\makeatletter
\@namedef{subjclassname@2020}{\textup{2020} Mathematics Subject Classification}
\makeatother

\def\abs#1{\left|#1\right|}
\date{}

\def\R{\mathbb{R}}
\def\C{\mathbb{C}}
\def\H{\mathbb{H}}
\def\O{\mathbb{O}}

\begin{document}
\title[Octonionic Witt basis]{Explicit   Witt basis over the tesor product of  Clifford algebras and   octonions}

\author{Yong Li}
\address{School of Mathematics and Statistics, Anhui Normal University, Wuhu 241002, Anhui, People's Republic of China.}
\email{leeey@ahnu.edu.cn}

\author{Guangbin Ren}
\address{%
 School of Mathematical Sciences, University of Science and Technology of China,Hefei, Anhui 230026, People's Republic of China.}
\email{rengb@ustc.edu.cn}

\author{Haiyan Wang}
\address{School of Science, Tianjin University of Technology and Education,
300222, Tianjin,   People's Republic of China.
}
\email{whaiyan@mail.ustc.edu.cn}

\thanks{Yong Li is supported by University Annual Scientific Research Plan of Anhui Province(2022AH050175).
Guangbin Ren is supported  by the National Natural Science Foundation of China (Grant Nos.12171448)}

\subjclass[2020]{Primary 30G35; Secondary 17A35}
\keywords{Octonion, Clifford algebra, Witt basis, $G_2$}

\begin{abstract}
In this article, we investigate how the Witt basis serves as a link between real and complex variables in higher-dimensional spaces. Our focus is on the detailed construction of the Witt basis within the tensor product space combining Clifford algebra and multiple octonionic spaces. This construction effectively introduces complex coordinates. The technique is based on a specific subgroup of octonionic automorphisms, distinguished by binary codes. 
This method allows us to perform  a Hermitian analysis of the complex structures within the tensor product space. 
  \end{abstract}
\maketitle
\tableofcontents
\section{Introduction}
Clifford analysis represents an advanced generalization of the theory of a single complex variable into higher dimensions. It has transitioned into a complex analog, known as Hermitian Clifford analysis, through the utilization of the Witt basis, a pivotal development cited in various works \cite{FsRV,BDS,BJFVS,SF,SAS}.

The concept of the Witt basis within Clifford algebra was first introduced by Sabadini and Sommen in 2002 \cite{SF}, marking the inception of Hermitian Clifford analysis. This field saw substantial development in 2007 through the contributions of Brackx, Bur\v{e}s, De Schepper, Eelbode, Sommen, and Sou\v{c}ek \cite{BJFVS}, which included its extension into the realm of quaternionic analysis, thereby initiating the study of quaternionic Hermitian analysis \cite{PIS}.

Given the advancements in Hermitian Clifford analysis within the contexts of complex numbers and quaternions, extending this study to the octonionic setting emerges as a logical progression. The octonion algebra $\mathbb{O}$, characterized by its non-associative and non-commutative nature and its automorphism group being the exceptional simple Lie group $G_2$, finds significant applications in various theories including string theory, the special theory of relativity, and quantum theory. The model of the universe in M-theory, for instance, is conceptualized as the Minkowski space $\mathbb{R}^{1,3}$ times a $G_2$ manifold of very small diameter \cite{BOR, OK}. The rich function theory of octonionic algebra is well-documented in references \cite{WR,JR,YR}.

In quaternionic Hermitian analysis, the quaternionic Witt basis of the tensor product of several quaternionic variables with Clifford algebras $\H^n \otimes_{\R}C\ell_{4n}\ $ is fundamental. This article aims to establish the octonionic Witt basis on the tensor product of several octonionic variables with Clifford algebras, setting the groundwork for octonionic Hermitian analysis.

The octonionic Witt basis is instrumental in generating Hermitian variables, which can be viewed as complex versions of twistor vectors. These twistor vectors serve as an orthonormal basis extension of $\mathbb{R}^8$ starting from a generic unit vector $X$ in $\mathbb{R}^8$. The study of the octonionic Witt basis reveals its deep connection with binary expansions. Specifically, certain involutions derived from the binary expansions of $\mathbb{Z}_8$ form a subgroup of $G_2$, crucial for constructing the octonionic Witt basis of $\O^n\otimes Cl_{8n}$. These involutions allow for the expression of projections in terms of the finite $G_2$ subgroup, marking a cornerstone in the theory of octonionic Hermitian analysis.

Furthermore, the octonionic Witt basis facilitates the introduction of Hermitian vector derivatives. Recalling the construction from the quaternionic scenario, in the tensor product $\H^n \otimes_{\R}C\ell_{4n}$, Hermitian vector derivatives are constructed involving the quaternionic basis and the Cauchy-Fueter operator. Similarly, with the octonionic basis and the octonionic Dirac operator, we introduce the octonionic Hermitian Dirac operators as the primary focus of octonionic Hermitian Clifford analysis, thus broadening the scope and depth of this field.

\section{Preliminaries}
This section introduces the fundamental concepts of octonion and Clifford algebras.

\subsection{Octonion Algebra}
The octonion algebra $\mathbb{O}$ is defined as a non-associative, non-commutative, normed division algebra over the real numbers $\mathbb{R}$. We designate $\mathbf{e_1}, \ldots, \mathbf{e_7}$ as its natural basis, satisfying the relation
$$\mathbf{e_i}\mathbf{e_j} + \mathbf{e_j}\mathbf{e_i} = -2\delta_{ij}$$
for all $i, j = 1, \ldots, 7$. The unit element is denoted by $\mathbf{e_0} = 1$. The multiplication rules of the octonions are encapsulated in the Fano plane, where each vertex corresponds to one of the basis elements $\mathbf{e_1}, \ldots, \mathbf{e_7}$, and every oriented line represents a quaternionic triple. The multiplication of any two basis elements is determined by the third element on their connecting line, with orientation influencing the sign (see \cite{Baez} for the Fano plane).

\begin{figure}[ht]
\centering
  \includegraphics[width=6cm]{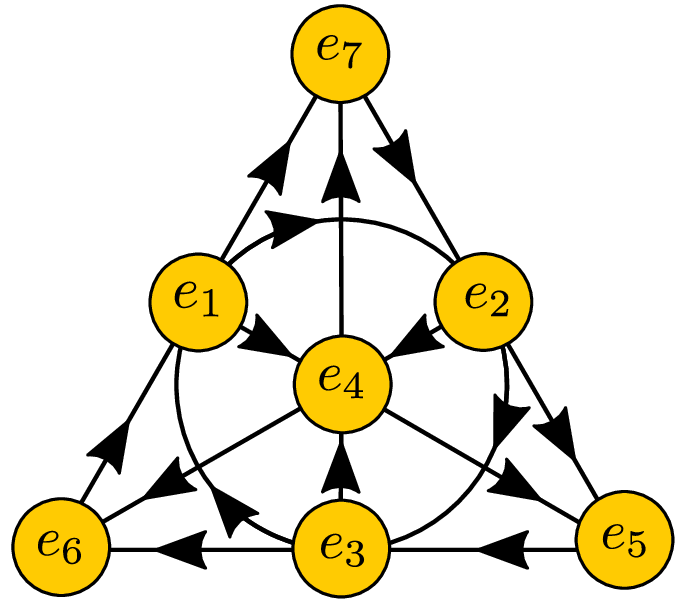}
  \caption{Fano plane}
  \label{fig:1}
\end{figure}

The automorphism group of the octonion algebra, $G_2$, is a 14-dimensional Lie group, defined as\cite{SG}
$$G_2 = \text{Aut}(\mathbb{O}) = \{g \in GL(8, \mathbb{R}) : g^*\phi = \phi, g\mathbf{e_0} = \mathbf{e_0}\},$$
where $\phi$ is a specific 3-form defined in terms of the dual basis $\{\mathbf{e^j}\}_{j=0}^7$ by
$$\phi = \mathbf{e}^{123} + \mathbf{e}^{145} + \mathbf{e}^{176} + \mathbf{e}^{257} + \mathbf{e}^{246} + \mathbf{e}^{347} + \mathbf{e}^{365},$$
with $\mathbf{e}^{ijk} = \mathbf{e}^i \wedge \mathbf{e}^j \wedge \mathbf{e}^k$.

\subsection{Clifford Algebra}
In $\mathbb{R}^8$, the canonical orthonormal basis is $\{g_0, g_1, \ldots, g_7\}$. The Clifford algebra $C\ell_8$, an associative but non-commutative algebra, is generated by the unit 1 and the basis elements $g_i$, obeying the relations
$$g_ig_j + g_jg_i = -2\delta_{ij}.$$
Thus, $\mathbb{R}^8$ is considered a subspace of $C\ell_8$.

\subsection{Isometry between $\mathbb{O}$ and $\mathbb{R}^8$}
An isometry between the octonions $\mathbb{O}$ and the Euclidean space $\mathbb{R}^8$ is established through the linear isomorphism $\Phi: \mathbb{R}^8 \rightarrow \mathbb{O}$, defined by
$$\Phi\left(\sum_{i=0}^7 x_i g_i\right) = \sum_{i=0}^7 x_i \mathbf{e_i}.$$
The inner product in both $\mathbb{R}^8$ and $\mathbb{O}$, denoted by $(\cdot, \cdot)$, is defined in $\mathbb{O}$ as
$$(p, q) = \text{Re}(\overline{p} q), \quad \forall p, q \in \mathbb{O},$$
making the map $\Phi$ an isometry between these two Hilbert spaces.

\section{Finite $G_2$-subgroup}

The construction of the Witt basis is reliant on a finite subgroup of automorphisms of the octonion algebra, composed of eight involutions induced by binary expressions. We define the set of integers modulo 8 as
\begin{equation}
	\mathbb{Z}_8 := \{0, 1, \ldots, 7\}.
\end{equation}
For any integer $i \in \mathbb{Z}_8$, its binary expression is given by
\begin{equation}
	i = i_3 2^2 + i_2 2 + i_1,
\end{equation}
where $i_1, i_2, i_3 \in \{0,1\}$. This binary expansion allows for a canonical identification $\mathbb{Z}^8 \cong (\mathbb{Z}_2)^3$.

Utilizing the binary expression, we introduce an automorphism of the octonions, $J_i: \mathbb{O} \rightarrow \mathbb{O}$. Considering $\mathbf{e_0}, \mathbf{e_1}, \ldots, \mathbf{e_7}$ as the standard basis of $\mathbb{O}$, we define
\begin{align*}
	J_i(\mathbf{e_1}) &= (-1)^{i_1}\mathbf{e_1}, \\
	J_i(\mathbf{e_2}) &= (-1)^{i_2}\mathbf{e_2}, \\
	J_i(\mathbf{e_4}) &= (-1)^{i_3}\mathbf{e_4},
\end{align*}
extending this definition to the entirety of $\mathbb{O}$ while respecting its multiplication structure. This approach is valid as $\{\mathbf{e_1}, \mathbf{e_2}, \mathbf{e_4}\}$ generate the algebra $\mathbb{O}$.

It is evident that $J_0$ acts as the identity map, with the remaining maps being involutions of the octonions.

Recalling that $G_2 = \text{Aut}(\mathbb{O})$, it includes $(\mathbb{Z}_2)^3$ as a finite subgroup through the natural embedding
\begin{equation}
	(\mathbb{Z}_2)^3 \hookrightarrow G_2,
\end{equation}
originating from the subgroup $\{J_0, J_1, \ldots, J_7\} \subset G_2$.

This embedding is facilitated by the map
\begin{equation}
	J: (\mathbb{Z}_2)^3 \rightarrow G_2,
\end{equation}
where each element $(i_3, i_2, i_1)$ is mapped to $J_i$ according to its binary expression.

The set $\{J_0, J_1, \ldots, J_7\}$ plays a crucial role in constructing the Witt basis. Utilizing the group $\{J_i\}_{i=0}^7$, we define the orthogonal projection
\begin{equation}
	\mathbb{P}_i: \mathbb{O} \rightarrow \mathbb{R}\mathbf{e_i},
\end{equation}
for each $i = 0, 1, \ldots, 7$. To achieve this, we introduce auxiliary maps since, by definition,
\begin{equation}
	J_j(\overline{\mathbf{e_i}}) = \pm \mathbf{e_i}.
\end{equation}
We denote these signs by $\sigma(j,i) = 0$ or $1$, such that
\begin{equation}
	J_j(\overline{\mathbf{e_i}}) = (-1)^{\sigma(j,i)} \mathbf{e_i},
\end{equation}
for all $i, j = 0, \ldots, 7$.
\begin{lemma}\label{lem-inv-235} If  $x=\sum_{i=0}^7x_i\mathbf{e_i}\in \O,$   then
\begin{eqnarray} \label{real} x_i=  \frac{1}{8}\mathbf{e_i}\Big(\sum_{j=0}^7(-1)^{\sigma(j,i)} J_j(x)\Big) \end{eqnarray}
for all $i=0, 1,\ldots, 7$.
\end{lemma}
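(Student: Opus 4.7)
The plan is to reduce the identity to the orthogonality relations for the characters of the finite abelian group $(\mathbb{Z}_2)^3$, which $\{J_j\}_{j=0}^7$ parametrizes under the embedding $J$. First, I would observe that since each $J_j$ is an automorphism of $\mathbb{O}$ that acts on the generators $\mathbf{e_1}, \mathbf{e_2}, \mathbf{e_4}$ by $\pm 1$, it must act diagonally on the entire standard basis: there exist functions $\chi_k \colon (\mathbb{Z}_2)^3 \to \{\pm 1\}$ with
\[
J_j(\mathbf{e_k}) = \chi_k(j)\,\mathbf{e_k},\qquad k=0,1,\dots,7.
\]
Using the Fano-plane relations recorded via the 3-form $\phi$ (so that, for example, $\mathbf{e_3}=\mathbf{e_1}\mathbf{e_2}$, $\mathbf{e_5}=\mathbf{e_1}\mathbf{e_4}$, $\mathbf{e_6}=\mathbf{e_2}\mathbf{e_4}$, $\mathbf{e_7}=\mathbf{e_3}\mathbf{e_4}$) and the multiplicativity of $J_j$, I would compute each $\chi_k$ explicitly as a monomial in $(-1)^{i_1},(-1)^{i_2},(-1)^{i_3}$, and observe that the resulting eight maps $\chi_0,\ldots,\chi_7$ are exactly the eight distinct characters of $(\mathbb{Z}_2)^3$. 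This is the key structural input.

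Next, I would translate the auxiliary sign $\sigma(j,i)$ into character language: since $\overline{\mathbf{e_0}}=\mathbf{e_0}$ and $\overline{\mathbf{e_k}}=-\mathbf{e_k}$ for $k\geq 1$, the definition $J_j(\overline{\mathbf{e_i}})=(-1)^{\sigma(j,i)}\mathbf{e_i}$ gives $(-1)^{\sigma(j,0)}=\chi_0(j)=1$ and $(-1)^{\sigma(j,i)}=-\chi_i(j)$ for $i\geq 1$. With this in hand, I would expand $x=\sum_k x_k \mathbf{e_k}$, substitute into the inner sum on the right-hand side of \eqref{real}, and interchange the order of summation to obtain
\[
\sum_{j=0}^7 (-1)^{\sigma(j,i)} J_j(x)
= \sum_{k=0}^7 x_k \mathbf{e_k}\sum_{j=0}^7 (-1)^{\sigma(j,i)}\chi_k(j).
\]
The inner $j$-sum is, up to a sign, the inner product of two characters of $(\mathbb{Z}_2)^3$, so by character orthogonality it equals $\pm 8\,\delta_{ik}$ (with the sign $+$ when $i=0$ and $-$ when $i\geq 1$, matching the $\sigma$-translation above).

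Finally, I would multiply on the left by $\mathbf{e_i}$ and use $\mathbf{e_i}^2=-1$ for $i\geq 1$ (resp.\ $\mathbf{e_0}^2=1$) to cancel the sign and produce $8x_i$, yielding the claimed formula. The only real obstacle is bookkeeping: one must keep the conjugation-induced sign, the sign from $\mathbf{e_i}^2$, and the character sign aligned so that the two cases $i=0$ and $i\geq 1$ both collapse to the single clean identity \eqref{real}. Everything else is a direct appeal to the orthogonality relations of the eight characters of $(\mathbb{Z}_2)^3$.
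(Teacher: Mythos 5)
Your proposal is correct, but it takes a different route from the paper. The paper's proof is a two-step reduction: it first records the averaging identity $\mathrm{Re}\,x=\frac{1}{8}\sum_{j=0}^{7}J_j(x)$ (stated without further justification), then writes $x_i=\mathrm{Re}(\overline{\mathbf{e_i}}x)$, applies the averaging identity to $\overline{\mathbf{e_i}}x$, and uses the multiplicativity of each automorphism, $J_j(\overline{\mathbf{e_i}}x)=J_j(\overline{\mathbf{e_i}})J_j(x)=(-1)^{\sigma(j,i)}\mathbf{e_i}J_j(x)$, to pull out the factor $\mathbf{e_i}$. You instead diagonalize the whole family $\{J_j\}$ at once, identify the eigenvalue functions $\chi_k$ as the eight characters of $(\mathbb{Z}_2)^3$, and prove the formula by expanding $x$ and invoking character orthogonality directly, tracking the conjugation sign and the sign from $\mathbf{e_i}^2$ separately for $i=0$ and $i\geq 1$. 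The two arguments rest on the same structural fact — the group average projects onto the $\mathbf{e_0}$-component precisely because the $\chi_k$ are distinct characters — but your version makes that representation-theoretic content explicit and in particular supplies a proof of the averaging identity that the paper simply asserts, at the cost of more sign bookkeeping; the paper's version is shorter because the single identity $x_i=\mathrm{Re}(\overline{\mathbf{e_i}}x)$ absorbs all of the case distinctions. Both are complete and correct.
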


\begin{proof}
For any octonion
 $$x=\sum_{i=0}^7x_i\mathbf{e_i}\in \O,$$   its real part can be expressed as
$$\mbox{Re}\ x= x_0=\frac{1}{8}\sum_{i=0}^7J_i(x).$$

To prove   \eqref{real}, direct  calculation  shows
\begin{eqnarray*}
  x_i&=&Re(\overline{\mathbf{e_i}}x)\notag
  \\ &=&\frac{1}{8}\sum_{j=0}^7J_j(\overline{\mathbf{e_i}}x) \notag
  \\ &=&\frac{1}{8}\sum_{j=0}^7J_j(\overline{\mathbf{e_i}})J_j(x)  \\
  &=&\frac{1}{8}\sum_{j=0}^7(-1)^{\sigma(j,i)} \mathbf{e_i}J_j(x)\notag
  \\ &=& \frac{1}{8}\mathbf{e_i}(\sum_{j=0}^7(-1)^{\sigma(j,i)} J_j(x)).
\end{eqnarray*}
\end{proof}

We can restate Lemma \ref{lem-inv-235} in the formalism of operators.

\begin{lemma} For each  $i=0, 1, \ldots, 7$,     the orthogonal projection
$$\mathbb P_i: \mathbb O\longrightarrow \mathbb R \mathbf{e_i}$$
 is given by
 \begin{eqnarray} \label{real-039} \mathbb P_i=  \frac{1}{8}\mathbf{e_i} \sum_{j=0}^7(-1)^{\sigma(j,i)} J_j.   \end{eqnarray}
\end{lemma}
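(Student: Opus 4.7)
The statement is an operator-level restatement of Lemma~\ref{lem-inv-235}; the proof will amount to reading that scalar identity as an equality of $\mathbb{R}$-linear endomorphisms of $\mathbb{O}$. My plan is to evaluate both sides of \eqref{real-039} on a generic octonion $x = \sum_{k=0}^{7} x_k \mathbf{e_k} \in \mathbb{O}$ and then appeal directly to Lemma~\ref{lem-inv-235}.

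First, I would expand the right-hand side of \eqref{real-039} applied to $x$: by Lemma~\ref{lem-inv-235}, the quantity $\tfrac{1}{8}\mathbf{e_i}\sum_{j=0}^{7}(-1)^{\sigma(j,i)} J_j(x)$ is exactly the scalar coordinate $x_i$ singled out by the orthogonal projection onto $\mathbb{R}\mathbf{e_i}$. Next, since this equality holds for every $x\in\mathbb{O}$ and both sides are $\mathbb{R}$-linear in $x$, it will lift from a pointwise identity to an identity of operators on $\mathbb{O}$, which is precisely \eqref{real-039}.

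I do not anticipate any real obstacle. The only potential subtlety is the non-associativity of $\mathbb{O}$, but this was already handled inside the proof of Lemma~\ref{lem-inv-235}, where the factor $\mathbf{e_i}$ was pulled outside the sum via distributivity over addition rather than via any alternative-algebra identity. Thus the present lemma is essentially a cosmetic reformulation of Lemma~\ref{lem-inv-235}, and the proof I would write amounts to one or two lines of operator bookkeeping.
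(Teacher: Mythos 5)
Your proposal is correct and matches the paper exactly: the paper offers no independent proof of this lemma, presenting it only as the operator-level restatement of Lemma~\ref{lem-inv-235}, which is precisely the pointwise-evaluation-plus-linearity argument you describe. The only cosmetic point worth noting is that the formula produces the scalar coordinate $x_i$ rather than the vector $x_i\mathbf{e_i}$, so strictly speaking $\mathbb{P}_i$ is being read as the coordinate functional onto $\mathbb{R}\mathbf{e_i}$ — an identification the paper itself makes implicitly.
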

\section{Octonionic Witt Basis}

We explore the tensor product of two algebras, $\O \otimes C\ell_8$, to introduce a Witt basis for this vector space. For simplicity, the tensor product symbol will be omitted; for instance, we write $\mathbf{e_i}g_A$ to denote $\mathbf{e_i} \otimes g_A$.

The automorphism $J_j \in \text{Aut}\O$ for $j = 0, 1, \ldots, 7$ has been previously introduced. We now extend $J_j$ to an automorphism in $\O \otimes C\ell_8$ as follows:
$$J_j(\mathbf{e_i}g_A) := J_j(\mathbf{e_i})g_A.$$

\begin{lemma}
	For any $j = 0, 1, \ldots, 7$, $J_j \in \text{Aut}(\O \otimes C\ell_8)$.
\end{lemma}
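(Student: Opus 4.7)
The plan is to verify the three defining properties of an $\R$-algebra automorphism for the extension $\widetilde{J}_j := J_j \otimes \mathrm{id}_{C\ell_8}$: $\R$-linearity, bijectivity, and multiplicativity. Linearity is immediate by construction: $\widetilde{J}_j$ is specified on the basis $\{\mathbf{e_i}g_A\}$ via $J_j$ on the octonionic factor and trivially on the Clifford factor, and then extended by $\R$-linearity to all of $\O\otimes C\ell_8$. Bijectivity follows once we note that each $J_j$ on $\O$ is either the identity ($j=0$) or an involution ($j\ne 0$), as observed in Section~3; hence $\widetilde{J}_j^{\,2} = \mathrm{id}$ on $\O\otimes C\ell_8$, so $\widetilde{J}_j$ is its own inverse.

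The substantive step is multiplicativity. On $\O\otimes C\ell_8$ the product of simple tensors is $(\mathbf{e_i}g_A)(\mathbf{e_k}g_B) = (\mathbf{e_i}\mathbf{e_k})(g_Ag_B)$, under the convention that the octonionic and Clifford generators commute across the tensor product. Applying $\widetilde{J}_j$ to this product gives $J_j(\mathbf{e_i}\mathbf{e_k})(g_Ag_B)$, which by $J_j \in \mathrm{Aut}(\O)$ equals $J_j(\mathbf{e_i})J_j(\mathbf{e_k})(g_Ag_B)$. On the other hand, $\widetilde{J}_j(\mathbf{e_i}g_A)\,\widetilde{J}_j(\mathbf{e_k}g_B) = (J_j(\mathbf{e_i})g_A)(J_j(\mathbf{e_k})g_B) = J_j(\mathbf{e_i})J_j(\mathbf{e_k})g_Ag_B$, which matches. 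The $\R$-bilinearity of the product then promotes this identity from simple tensors to arbitrary elements of $\O\otimes C\ell_8$.

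The main obstacle is purely a matter of bookkeeping: one must keep track that although $\O$ is non-associative (so $\O\otimes C\ell_8$ inherits a non-associative product), this does not interfere with the pairwise identity $\widetilde{J}_j(uv)=\widetilde{J}_j(u)\widetilde{J}_j(v)$, which is the only thing required of an automorphism. The only nontrivial input is that $J_j$ is an algebra automorphism of $\O$ itself, i.e.\ that the sign rule on the generators $\mathbf{e_1},\mathbf{e_2},\mathbf{e_4}$ dictated by the binary digits of $j$ is consistent with the Fano plane relations. Since this was established when the subgroup $\{J_0,\ldots,J_7\}\subset G_2$ was introduced, no new content is needed beyond the formal calculation indicated above.
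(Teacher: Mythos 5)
Your proof is correct and follows essentially the same route as the paper: the key step in both is the multiplicativity check on simple tensors, $J_j(\mathbf{e_i}g_A\,\mathbf{e_k}g_B)=J_j(\mathbf{e_i}\mathbf{e_k})g_Ag_B=J_j(\mathbf{e_i}g_A)J_j(\mathbf{e_k}g_B)$, reduced to the fact that $J_j\in\mathrm{Aut}(\O)$. Your additional remarks on linearity and on bijectivity via $J_j^2=\mathrm{id}$ only make explicit what the paper leaves implicit.
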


\begin{proof}
	Direct verification shows that
	$$J_j(\mathbf{e_i}g_A \mathbf{e_k}g_B) = J_j(\mathbf{e_i e_k}g_Ag_B) = J_j(\mathbf{e_i}g_A)J_j(\mathbf{e_k}g_B).$$
\end{proof}

In $\O$, the conjugate of an element $x = x_0 + \sum_{i=1}^7 x_i\mathbf{e_i}, x_i \in \R$, is defined by
$$\overline{x} = x_0 - \sum_{i=1}^7 x_i\mathbf{e_i}.$$

We set
\begin{equation}\label{basis-093}
 \Omega := \sum_{i=0}^7 \overline{\mathbf{e_i}}g_i \in \O \otimes C\ell_8,
\end{equation}
where $\{\mathbf{e_0}, \mathbf{e_1}, \ldots, \mathbf{e_7}\}$ is the natural basis of $\O$ and $\{g_0, g_1, \ldots, g_7\}$ is the standard basis of $\R^8$, which also generates the Clifford algebra $C\ell_8$.

\begin{definition}
	The octonionic Witt basis of $\O \otimes C\ell_8$ is given by
	\begin{equation}
		\begin{dcases}
			f_0 &= J_0(\Omega), \\
			f_1 &= J_1(\Omega), \\
			&\vdots \\
			f_7 &= J_7(\Omega),
		\end{dcases}
	\end{equation}
	where $\Omega$ is as defined above.
\end{definition}

\begin{remark} Here we present the explicit expression of the Witt basis $\{f_j\}_{j=0}^7$ for octonions:
\begin{equation*}
    \begin{dcases}
    f_0  =\mathbf{e_0}g_0-\mathbf{e_1}g_1-\mathbf{e_2}g_2-\mathbf{e_3}g_3-\mathbf{e_4}g_4-\mathbf{e_5}g_5-\mathbf{e_6}g_6-\mathbf{e_7}g_7; \\
    f_1  =\mathbf{e_0}g_0+\mathbf{e_1}g_1-\mathbf{e_2}g_2+\mathbf{e_3}g_3-\mathbf{e_4}g_4+\mathbf{e_5}g_5-\mathbf{e_6}g_6+\mathbf{e_7}g_7; \\
    f_2  =\mathbf{e_0}g_0-\mathbf{e_1}g_1+\mathbf{e_2}g_2+\mathbf{e_3}g_3-\mathbf{e_4}g_4-\mathbf{e_5}g_5+\mathbf{e_6}g_6+\mathbf{e_7}g_7; \\
    f_3  =\mathbf{e_0}g_0+\mathbf{e_1}g_1+\mathbf{e_2}g_2-\mathbf{e_3}g_3-\mathbf{e_4}g_4+\mathbf{e_5}g_5+\mathbf{e_6}g_6-\mathbf{e_7}g_7; \\
    f_4  =\mathbf{e_0}g_0-\mathbf{e_1}g_1-\mathbf{e_2}g_2-\mathbf{e_3}g_3+\mathbf{e_4}g_4+\mathbf{e_5}g_5+\mathbf{e_6}g_6+\mathbf{e_7}g_7; \\
    f_5  =\mathbf{e_0}g_0+\mathbf{e_1}g_1-\mathbf{e_2}g_2+\mathbf{e_3}g_3+\mathbf{e_4}g_4-\mathbf{e_5}g_5+\mathbf{e_6}g_6-\mathbf{e_7}g_7; \\
    f_6  =\mathbf{e_0}g_0-\mathbf{e_1}g_1+\mathbf{e_2}g_2+\mathbf{e_3}g_3+\mathbf{e_4}g_4+\mathbf{e_5}g_5-\mathbf{e_6}g_6-\mathbf{e_7}g_7; \\
    f_7  =\mathbf{e_0}g_0+\mathbf{e_1}g_1+\mathbf{e_2}g_2-\mathbf{e_3}g_3+\mathbf{e_4}g_4-\mathbf{e_5}g_5-\mathbf{e_6}g_6+\mathbf{e_7}g_7.
      \end{dcases}
  \end{equation*}
\end{remark}
Analogous to Lemma \ref{lem-inv-235}, we have
\begin{proposition}\label{lem-inv-236}
	Assume $$p = \sum_{i=0}^7 \mathbf{e_i}p_i \in \O \otimes_{\R}C\ell_8$$ with $p_i \in C\ell_8$. Then
	$$p_i = \frac{1}{8}\mathbf{e_i}\left(\sum_{j=0}^7 (-1)^{\sigma(j,i)} J_j(p)\right)$$
	for all $i = 0, 1, \ldots, 7$.
\end{proposition}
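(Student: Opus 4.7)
The plan is to imitate the proof of Lemma \ref{lem-inv-235} essentially verbatim, with the real scalar coefficients $x_i$ replaced by the Clifford-valued coefficients $p_i \in C\ell_8$. The key enabling observation is already built into the extension $J_j(\mathbf{e_i} g_A) = J_j(\mathbf{e_i}) g_A$: each $J_j$ acts as the identity on the $C\ell_8$-factor, so the $p_k$ are $J_j$-invariant and can be treated as constants under all sums that appear.

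First I would verify the partial real-part extraction formula $p_0 = \tfrac{1}{8}\sum_{j=0}^7 J_j(p)$. Applying $J_j$ termwise to $p = \sum_k \mathbf{e_k} p_k$ and exchanging the two summations, this reduces to the identity $\tfrac{1}{8}\sum_j J_j(\mathbf{e_k}) = \delta_{k,0}\,\mathbf{e_0}$, which is just the octonionic formula $\operatorname{Re}(x) = \tfrac{1}{8}\sum_j J_j(x)$ applied to each basis vector $\mathbf{e_k}$. Next I would note that $p_i$ is exactly the coefficient of $\mathbf{e_0}$ in the product $\overline{\mathbf{e_i}} p$, since $\overline{\mathbf{e_i}} \mathbf{e_k}$ equals $\mathbf{e_0}$ when $k=i$ and lies in $\operatorname{span}\{\mathbf{e_1},\ldots,\mathbf{e_7}\}$ otherwise; together with the previous step, this gives $p_i = \tfrac{1}{8}\sum_j J_j(\overline{\mathbf{e_i}} p)$.

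Combining these two ingredients, the target identity follows by the same four-line chain as in Lemma \ref{lem-inv-235}: starting from $p_i = \tfrac{1}{8}\sum_j J_j(\overline{\mathbf{e_i}} p)$, apply multiplicativity $J_j(\overline{\mathbf{e_i}} p) = J_j(\overline{\mathbf{e_i}}) J_j(p)$ (which is the content of the lemma that precedes the definition of the Witt basis), substitute $J_j(\overline{\mathbf{e_i}}) = (-1)^{\sigma(j,i)} \mathbf{e_i}$ from the definition of $\sigma(j,i)$, and finally pull $\mathbf{e_i}$ out of the sum by bilinearity.

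The only delicate issue is the non-associativity of $\O \otimes C\ell_8$. In every triple product that appears in the chain, however, one of the three factors lies in the associative subalgebra $1 \otimes C\ell_8$ (either some $p_k$ or the Clifford basis monomial $g_A$ inside $J_j(p)$), so the associator vanishes and the octonionic multiplication $\mathbf{e_i}\,\mathbf{e_k}$ happens cleanly inside the left tensor factor. This is the step I would flag as the main potential obstacle, but it dissolves on inspection once one observes that every Clifford factor commutes and associates with everything in sight.
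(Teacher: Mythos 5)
Your proposal is correct and takes essentially the same route as the paper, whose proof simply says that the argument of Lemma \ref{lem-inv-235} "can be extended" to $\O\otimes C\ell_8$ once the $J_j$ are regarded as automorphisms of the tensor product; you carry out that extension explicitly. Your added check that every associator vanishes because one factor always lies in $1\otimes C\ell_8$ is exactly the point the paper leaves implicit, so the proposal is, if anything, more complete than the printed proof.
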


\begin{proof} For any
$$x=\sum_{i=0}^7x_i\mathbf{e_i}\in \O,$$
we have expressed its coefficients in  \eqref{real}  as
$$
 x_i=  \frac{1}{8}\mathbf{e_i}\Big(\sum_{j=0}^7(-1)^{\sigma(j,i)} J_j(x)\Big)
 $$
 for any  $i=0,1,\ldots, 7$.

By check carefully its proof, the result  can be extended  to $\O\otimes C\ell_8$, where
we regarded  $J_i\in\mbox{Aut}(\O\otimes C\ell_8)$.

By definition,
 \begin{equation*}\label{basis-884} p=\sum_{i=0}^7\overline{\mathbf{e_i}}p_i \in \O\otimes C\ell_8 \end{equation*}
so that its coefficients $g_i$ can be expressed as
 \begin{equation*}\label{basis}
  p_i=\frac{1}{8}\mathbf{e_i}\Big(\sum_{j=0}^7(-1)^{\sigma(j,i)} J_j(p)\Big).
\end{equation*}
\end{proof}

We now prove that any basis in $\mathbb{R}^8$ can be expressed linearly by the Witt basis of octonions, with coefficients being octonionic numbers.
\begin{lemma}
	The basis $\{g_j\}_{j=0}^7$ of $\R^8$ can be $\O$-linearly expressed via the Witt basis $\{f_j\}_{j=0}^7$ as
	$$g_i = \frac{1}{8}\mathbf{e_i}\left(\sum_{j=0}^7 (-1)^{\sigma(j,i)} f_j\right).$$
\end{lemma}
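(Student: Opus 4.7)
The plan is to read the lemma as the inversion of the defining relation $f_j = J_j(\Omega)$: given the Witt basis $\{f_j\}_{j=0}^7$, recover the standard basis $\{g_i\}_{i=0}^7$ by forming suitable signed averages of the $f_j$. Viewed this way, the whole argument is a single character-sum computation whose ingredients are an explicit expansion of each $f_j$ in the spanning set $\{\mathbf{e_k}g_k\}_{k=0}^7$ and orthogonality of characters on $(\mathbb{Z}_2)^3$. Alternatively, the identity is what one gets by feeding $\Omega$ into Proposition \ref{lem-inv-236}, since $J_j(\Omega) = f_j$ by definition.

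Concretely, I would proceed in three steps. First, since each $J_j$ is an automorphism of $\O\otimes C\ell_8$ that fixes the Clifford factor, apply it termwise to $\Omega = \sum_k \overline{\mathbf{e_k}}\, g_k$ to obtain
\[
    f_j \;=\; J_j(\Omega) \;=\; \sum_{k=0}^7 J_j(\overline{\mathbf{e_k}})\, g_k \;=\; \sum_{k=0}^7 (-1)^{\sigma(j,k)}\, \mathbf{e_k}\, g_k,
\]
using the very definition of $\sigma$. Second, substitute this into $\sum_j (-1)^{\sigma(j,i)} f_j$ and swap the order of summation:
\[
    \sum_{j=0}^7 (-1)^{\sigma(j,i)} f_j \;=\; \sum_{k=0}^7 \Bigl(\sum_{j=0}^7 (-1)^{\sigma(j,i) + \sigma(j,k)}\Bigr)\, \mathbf{e_k}\, g_k.
\]
Third, multiply by $\tfrac{1}{8}\mathbf{e_i}$ on the left to match the claimed formula.

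The main obstacle is the orthogonality
\[
    \sum_{j=0}^7 (-1)^{\sigma(j,i)+\sigma(j,k)} \;=\; 8\,\delta_{ik},
\]
which is the crux of the argument. The case $i = k$ is immediate. For $i \neq k$, the plan is to unpack $\sigma(j,\cdot)$ through the binary parametrization $J\colon (\mathbb{Z}_2)^3 \to G_2$: writing $j = (j_3,j_2,j_1)$ and noting that $J_j$ signs the generators $\mathbf{e_1},\mathbf{e_2},\mathbf{e_4}$ according to $j_1,j_2,j_3$, a short induction over the products defining $\mathbf{e_3},\mathbf{e_5},\mathbf{e_6},\mathbf{e_7}$ identifies $(-1)^{\sigma(j,k)}$, up to a twist depending only on whether $k=0$, with the character $(-1)^{j_1k_1+j_2k_2+j_3k_3}$ of $(\mathbb{Z}_2)^3$. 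When $i \neq k$ the summed exponent is then a nontrivial linear functional on $(\mathbb{Z}_2)^3$, and standard orthogonality of characters forces the sum to vanish. With the inner sum equal to $8\mathbf{e_i}g_i$, left multiplication by $\tfrac{1}{8}\mathbf{e_i}$ yields the claimed $\O$-linear expansion of $g_i$ in the Witt basis.
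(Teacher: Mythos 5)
Your strategy---expanding $f_j=J_j(\Omega)=\sum_{k}(-1)^{\sigma(j,k)}\mathbf{e_k}g_k$, swapping the order of summation, and invoking orthogonality of characters on $(\mathbb{Z}_2)^3$---is sound and in fact more self-contained than the paper's proof, which simply feeds $\Omega$ into Proposition \ref{lem-inv-236} (your ``alternative'' route is exactly what the paper does). Your orthogonality claim $\sum_{j=0}^7(-1)^{\sigma(j,i)+\sigma(j,k)}=8\delta_{ik}$ is correct: $(-1)^{\sigma(j,k)}$ equals the character $(-1)^{j_1k_1+j_2k_2+j_3k_3}$ times an extra factor $-1$ exactly when $k\neq 0$ (coming from the conjugation in the definition of $\sigma$), and these extra factors cancel on the diagonal $i=k$ while the off-diagonal character sums vanish. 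So the inner sum is indeed $8\,\mathbf{e_i}g_i$.

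The gap is in your last sentence. Left multiplication by $\tfrac18\mathbf{e_i}$ gives $\tfrac18\mathbf{e_i}\cdot 8\,\mathbf{e_i}g_i=\mathbf{e_i}^2\,g_i=-g_i$ for $i=1,\dots,7$, since $\mathbf{e_i}^2=-1$; it does not give $g_i$. Carried out honestly, your computation proves $g_i=\tfrac18\overline{\mathbf{e_i}}\bigl(\sum_{j=0}^7(-1)^{\sigma(j,i)}f_j\bigr)$, i.e.\ the displayed identity is off by a sign for $i\neq 0$. One can confirm this from the paper's explicit table of the $f_j$: for $i=1$ one gets $-f_0+f_1-f_2+f_3-f_4+f_5-f_6+f_7=8\,\mathbf{e_1}g_1$, and $\mathbf{e_1}(8\,\mathbf{e_1}g_1)=-8g_1$. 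The same defect is latent in the paper's own one-line proof, since Proposition \ref{lem-inv-236} is stated for expansions $p=\sum_i\mathbf{e_i}p_i$ but is applied to $\Omega=\sum_i\overline{\mathbf{e_i}}g_i$, which flips the sign of every coefficient with $i\neq 0$. You should either repair the final step (the left factor must be $\overline{\mathbf{e_i}}$, not $\mathbf{e_i}$) or explicitly flag the sign discrepancy; as written, your proposal asserts a conclusion that its own, otherwise correct, computation contradicts.
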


\begin{proof}
By definition, we have
 \begin{equation*}\label{basis-884} \Omega=\sum_{i=0}^7\overline{\mathbf{e_i}}g_i \in \O\otimes C\ell_8 \end{equation*}
so that we can apply Proposition \ref{lem-inv-236} to express its coefficients $g_i$ as
 \begin{equation*}\label{basis}
  g_i=\frac{1}{8}\mathbf{e_i}\Big(\sum_{j=0}^7(-1)^{\sigma(j,i)} J_j(\Omega)\Big)=\frac{1}{8}\mathbf{e_i}\Big(\sum_{j=0}^7(-1)^{\sigma(j,i)} f_j\Big).
\end{equation*}
\end{proof}

This Witt basis is crucial for constructing Hermitian variables, further enriching the algebraic structure and facilitating the exploration of octonionic spaces and their applications.

Because the Witt basis $\{f_j\}_{j=0}^7$ of octonions can be linearly expressed with respect to the standard orthogonal basis of
$\R^8$ in terms of octonions.
For each vector
\begin{equation}\label{eq:XXX}
	X=\sum_{i=0}^7x_ig_i \in \R^8,
\end{equation}
its coefficient
$Z_i$ under the octonion Witt basis is uniquely determined.
This coefficient is the important Hermitian variable in octonionic Hermitian analysis.

\begin{definition}\label{Hvandtv}
Consider the vector  in $\mathbb R^8$
  $$X=\sum_{i=0}^7x_ig_i \in \R^8, $$
  where $\{g_0, g_1,\ldots g_7 \}$ is a basis of
$\R^8$.
We recall the definition of $f$ and $\Omega$ in   \eqref{basis-884} and
 \eqref{basis-093}.

We define the {\it Hermitian}  variables in terms of the Witt basis  $\{f_0, f_1,\ldots,  f_7 \}$ in  $\O\otimes C\ell_8$ as
\begin{equation}\label{eq:basis-027}
    \begin{dcases}
    Z_0  :=f_0 J_0   (\Phi(X));  \\
     Z_1 :=f_1 J_1   (\Phi(X));   \\
     \quad  \cdots\\
    Z_7  :=f_7 J_7  (\Phi(X)).
    \end{dcases}
  \end{equation}
Namely,
 \begin{equation}\label{basis-897} Z_i:=J_i(\Omega\Phi(X))=J_i(\Omega) J_i(\Phi(X))=f_iJ_i(\Phi(X))\in  \O\otimes C\ell_8
  \end{equation} for any $i=0,\ldots, 7$.
\end{definition}

\begin{definition}
Consider the vector  in $\mathbb R^8$
  $$X=\sum_{i=0}^7x_ig_i \in \R^8, $$
  where $\{g_0, g_1,\ldots g_7 \}$ is a basis of
$\R^8$.
 The {\it twistor}  vectors  $X_i\in \R^8$ are  defined by
 \begin{equation}\label{eq:basis-532}
    \begin{dcases}
    X_0  :=\Phi^{-1}(\Phi(X)\overline{\mathbf{e_0}});   \\
    X_1  :=\Phi^{-1}(\Phi(X)\overline{\mathbf{e_1}});   \\
     \quad  \cdots\\
     X_7  :=\Phi^{-1}(\Phi(X)\overline{\mathbf{e_7}}).
     \end{dcases}
  \end{equation}
Namely, $$X_i  :=\Phi^{-1}(\Phi(X)\overline{\mathbf{e_i}})\in \R^8,$$for any $i=0,\ldots, 7$.
\end{definition}
\begin{remark}
 Here we present the explicit expression of the {\it twistor}  vectors  $X_i\in \R^8$ :
	\begin{equation*}
    \begin{dcases}
    X_0  :=x_0g_0+x_1g_1+x_2g_2+x_3g_3+x_4g_4+x_5g_5+x_6g_6+x_7g_7;   \\
     X_1  :=x_1g_0-x_0g_1-x_3g_2+x_2g_3-x_5g_4+x_4g_5+x_7g_6-x_6g_7;   \\
      X_2  :=x_2g_0+x_3g_1-x_0g_2-x_1g_3-x_6g_4-x_7g_5+x_4g_6+x_5g_7;   \\
       X_3  :=x_3g_0-x_2g_1+x_1g_2-x_0g_3-x_7g_4+x_6g_5-x_5g_6+x_4g_7;   \\
        X_4  :=x_4g_0+x_5g_1+x_6g_2+x_7g_3-x_0g_4-x_1g_5-x_2g_6-x_3g_7;   \\
         X_5  :=x_5g_0-x_4g_1+x_7g_2-x_6g_3+x_1g_4-x_0g_5+x_3g_6-x_2g_7;   \\
          X_6  :=x_6g_0-x_7g_1-x_4g_2+x_5g_3-x_2g_4-x_3g_5-x_0g_6+x_1g_7;   \\
           X_7  :=x_7g_0+x_6g_1-x_5g_2-x_4g_3+x_3g_4+x_2g_5-x_1g_6-x_0g_7.
     \end{dcases}
  \end{equation*}
\end{remark}

The relationship between the Hermitian variables
 $Z_i\in \O\otimes C\ell_8$ and the  {\it twistor} vectors
$X_i\in \R^8$, is akin to the relationship considered in the case of
$\R^2$, where we consider the relationship between
$z,\bar{z}$ and the system of $x,y$. The specific relationship is given by the following lemma.

\begin{lemma}\label{prop3.5} For any $X\in\mathbb R^8$, we have
\begin{equation*}
  Z_0= \Omega \Phi(X)=\sum_{j=0}^7\mathbf{e_j}X_j.
\end{equation*}

\end{lemma}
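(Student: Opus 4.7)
The plan is to establish both equalities in turn. The first, $Z_0 = \Omega\,\Phi(X)$, is immediate from Definition~\ref{Hvandtv}: since $J_0$ acts as the identity automorphism on $\O \otimes C\ell_8$, one has $f_0 = J_0(\Omega) = \Omega$ and $J_0(\Phi(X)) = \Phi(X)$, so $Z_0 = f_0\, J_0(\Phi(X)) = \Omega\,\Phi(X)$.

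For the second equality, my plan is to expand both sides in the basis $\{\mathbf{e_j}\, g_l\}_{j,l=0}^7$ of $\O \otimes C\ell_8$ and compare coefficients. Write $y = \Phi(X) \in \O$. Since $\Phi(X)$ and the $g_i$ lie in disjoint tensor factors, the left-hand side becomes $\Omega\,\Phi(X) = \sum_{i=0}^7 (\overline{\mathbf{e_i}}\, y)\, g_i$. Using the orthonormal expansion $z = \sum_j \mathrm{Re}(\overline{\mathbf{e_j}}\, z)\, \mathbf{e_j}$ valid for every $z \in \O$, the coefficient of $\mathbf{e_j}\, g_l$ on this side is $\mathrm{Re}\bigl(\overline{\mathbf{e_j}}\,(\overline{\mathbf{e_l}}\, y)\bigr)$. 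On the right, $X_j = \Phi^{-1}(y\, \overline{\mathbf{e_j}}) \in \R^8$, so its $l$-th coordinate equals $\mathrm{Re}\bigl(\overline{\mathbf{e_l}}\,(y\, \overline{\mathbf{e_j}})\bigr)$, which is precisely the coefficient of $\mathbf{e_j}\, g_l$ in $\sum_j \mathbf{e_j}\, X_j$.

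The remaining task, which I expect to be the only genuine obstacle, is to verify the identity
\[
\mathrm{Re}\bigl(\overline{\mathbf{e_j}}\,(\overline{\mathbf{e_l}}\, y)\bigr) \;=\; \mathrm{Re}\bigl(\overline{\mathbf{e_l}}\,(y\, \overline{\mathbf{e_j}})\bigr)
\]
for all $j, l \in \{0,\dots,7\}$ and all $y \in \O$. The apparent difficulty is the non-associativity of $\O$, but this is resolved by the alternative-algebra structure: the associator $[a,b,c] = (ab)c - a(bc)$ is alternating and purely imaginary, so the real part of a triple product is independent of parenthesization, and $\mathrm{Re}(abc)$ is unambiguous. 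Combined with the binary cyclicity $\mathrm{Re}(uv) = \mathrm{Re}(vu)$, which follows from the symmetry of the inner product $(u,v) = \mathrm{Re}(\overline{u}v)$, this yields the full cyclic invariance $\mathrm{Re}(abc) = \mathrm{Re}(bca) = \mathrm{Re}(cab)$. Specializing this to $a = \overline{\mathbf{e_j}}$, $b = \overline{\mathbf{e_l}}$, $c = y$ gives the required identity, completes the coefficient match, and finishes the proof.
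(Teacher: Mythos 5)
Your proposal is correct. The first equality is indeed immediate from $J_0=\mathrm{id}$, and your coefficient computation for the second is sound: the coefficient of $\mathbf{e_j}g_l$ on the left is $\mathrm{Re}\bigl(\overline{\mathbf{e_j}}(\overline{\mathbf{e_l}}\,y)\bigr)$, on the right it is $\mathrm{Re}\bigl(\overline{\mathbf{e_l}}(y\,\overline{\mathbf{e_j}})\bigr)$, and these agree by the standard facts that the associator of $\O$ has zero real part and that $\mathrm{Re}(uv)=\mathrm{Re}(vu)$, so $\mathrm{Re}(abc)$ is unambiguous and cyclic.

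The paper's proof is also a direct expansion, but it is organized differently: it writes $\Omega\Phi(X)=\sum_i\mathbf{e_i}Y_i$ with $Y_i\in\R^8\subset C\ell_8$, groups the terms of the double sum according to which $\pm\mathbf{e_i}$ the product $\overline{\mathbf{e_k}}\mathbf{e_j}$ equals, and then reassembles $\Phi(Y_i)=\bigl(\sum x_j\mathbf{e_j}\bigr)\overline{\mathbf{e_i}}=\Phi(X)\overline{\mathbf{e_i}}$. The reassembly step silently uses the alternative-law identity $\mathbf{e_k}\bigl((\overline{\mathbf{e_k}}\mathbf{e_j})\overline{\mathbf{e_i}}\bigr)=\mathbf{e_j}\overline{\mathbf{e_i}}$, i.e., the same non-associativity issue you isolate, but leaves it implicit. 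Your version trades the paper's octonion-valued bookkeeping for a fully scalar coefficient match and makes the one genuinely octonionic ingredient — cyclic invariance of $\mathrm{Re}$ on triple products — explicit and cleanly justified; the paper's version is shorter but glosses over exactly the point where non-associativity could bite. Both arguments are valid and of comparable depth.
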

\begin{proof}
  Suppose that
 \begin{equation}\label{eq:basis-589}
  \Omega \Phi(X)=\Big( \sum_{k=0}^7\overline{\mathbf{e_k}}g_k \Big)\Big(\sum_{j=0}^7x_j\mathbf{e_j}\Big)=\sum_{i=0}^7 \mathbf{e_i}Y_i,
\end{equation}
where $$Y_i\in \R^8\subset C\ell_8.$$
We only need to prove that $$\Phi(Y_i)=\Phi(X)\overline{\mathbf{e_i}}.$$

One can easily check from \eqref{eq:basis-589}  that $$Y_i=\sum_{\overline{\mathbf{e_k}}\mathbf{e_j}=\pm \mathbf{e_i}} g_kx_j(\overline{\mathbf{e_k}}\mathbf{e_j})\overline{\mathbf{e_i}}.$$
Consequently,
 $$\Phi(Y_i)=\sum_{\overline{\mathbf{e_k}}\mathbf{e_j}=\pm \mathbf{e_i}}\mathbf{e_k}x_j(\overline{\mathbf{e_k}}\mathbf{e_j})\overline{\mathbf{e_i}}
 =\Big( \sum_{\overline{\mathbf{e_k}}\mathbf{e_j}=\pm \mathbf{e_i}}
 x_j  \mathbf{e_j} \Big)\overline{\mathbf{e_i}}=\Phi(X)\overline{\mathbf{e_i}}.$$
  This completes the proof.
\end{proof}

Now we can express any element $X_i\in \R^8$ in terms the  Hermitian variables,  and vise versa.

\begin{theorem}\label{thm:310} Let    $\{X_i\}_{i=0}^7$ be the twistor vectors in $\mathbb R^8$ and
 $\{Z_i\}_{i=0}^7$
 the  Hermitian   variables in    $\O\otimes C\ell_8$. For any $i=0,1,\ldots,7$, we have

  \begin{eqnarray}
  X_i&=&\frac{1}{8}\mathbf{e_i}\sum_{j=0}^7(-1)^{\sigma(j,i)} Z_j; \label{basis-102}
  \\ \notag \\
  Z_i&=&\sum_{j=0}^7J_i(\mathbf{e_j})X_j. \label{basis-134}
  \end{eqnarray}

\end{theorem}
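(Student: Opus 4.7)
The statement consists of two formulas that assemble cleanly from earlier results in the excerpt. The central observation is that, by Lemma~\ref{prop3.5}, the element $p := \Omega\,\Phi(X) \in \O \otimes C\ell_8$ admits the decomposition $p = \sum_{j=0}^7 \mathbf{e_j}\,X_j$, where the twistor vectors $X_j \in \R^8 \subset C\ell_8$ play exactly the role of octonionic coefficients. At the same time, the reformulation \eqref{basis-897} in Definition~\ref{Hvandtv} yields $Z_j = J_j(p)$. The strategy is therefore to identify $p$ with a generic element of $\O \otimes C\ell_8$ and feed it into the two reciprocal machineries already developed: the coefficient-recovery formula of Proposition~\ref{lem-inv-236} on one hand, and the direct action of $J_i$ on the other.

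For \eqref{basis-102}, I would apply Proposition~\ref{lem-inv-236} to $p$ with $p_i = X_i$. Since $J_j(p) = Z_j$, that proposition delivers
$$X_i = \frac{1}{8}\mathbf{e_i}\sum_{j=0}^7 (-1)^{\sigma(j,i)} J_j(p) = \frac{1}{8}\mathbf{e_i}\sum_{j=0}^7 (-1)^{\sigma(j,i)} Z_j$$
with no further computation.

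For \eqref{basis-134}, I would simply apply $J_i$ to the decomposition $p = \sum_{j=0}^7 \mathbf{e_j}\,X_j$. Because $J_i$ was extended to $\O\otimes C\ell_8$ by acting as the identity on the Clifford factor, and because each $X_j$ lies inside $\R^8 \subset C\ell_8$, the automorphism passes through the coefficients cleanly, yielding
$$Z_i = J_i(p) = \sum_{j=0}^7 J_i(\mathbf{e_j})\,X_j.$$

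Rather than a genuine obstacle, the main point demanding care is the bookkeeping: verifying that the $X_j$ are correctly identified with the $p_j$ of Proposition~\ref{lem-inv-236}, and that $J_i$ indeed commutes past $X_j$ because of how it was extended to the tensor product. Both facts follow directly from Lemma~\ref{prop3.5} and the definition of $J_j$ on $\O \otimes C\ell_8$ from the previous section, so the theorem is essentially a corollary of the apparatus already built.
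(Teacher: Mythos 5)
Your proposal is correct and follows essentially the same route as the paper: both derive \eqref{basis-102} by feeding the decomposition $\Omega\Phi(X)=\sum_j \mathbf{e_j}X_j$ from Lemma~\ref{prop3.5} into the coefficient-recovery formula, and both obtain \eqref{basis-134} by applying $J_i$ directly to that decomposition. If anything, your citation of Proposition~\ref{lem-inv-236} (the $\O\otimes C\ell_8$ version, needed since $X_j\in C\ell_8$ rather than $\R$) is slightly more precise than the paper's appeal to Lemma~\ref{lem-inv-235}.
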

\begin{proof}
It follows Proposition \ref{prop3.5} and Lemma \ref{lem-inv-235} that \eqref{basis-102} holds.
The converse identities follow from  Proposition \ref{prop3.5}.
\end{proof}

\begin{remark}
Using \eqref{eq:basis-589}, we can provide the explicit expression for the Hermitian variable$Z_i$:
  \begin{equation*}
    \begin{dcases}
    Z_0  =\mathbf{e_0}X_0+\mathbf{e_1}X_1+\mathbf{e_2}X_2+\mathbf{e_3}X_3+\mathbf{e_4}X_4+\mathbf{e_5}X_5+\mathbf{e_6}X_6+\mathbf{e_7}X_7; \\
    Z_1  =\mathbf{e_0}X_0-\mathbf{e_1}X_1+\mathbf{e_2}X_2-\mathbf{e_3}X_3+\mathbf{e_4}X_4-\mathbf{e_5}X_5+\mathbf{e_6}X_6-\mathbf{e_7}X_7; \\
    Z_2  =\mathbf{e_0}X_0+\mathbf{e_1}X_1-\mathbf{e_2}X_2-\mathbf{e_3}X_3+\mathbf{e_4}X_4+\mathbf{e_5}X_5-\mathbf{e_6}X_6-\mathbf{e_7}X_7; \\
    Z_3  =\mathbf{e_0}X_0-\mathbf{e_1}X_1-\mathbf{e_2}X_2+\mathbf{e_3}X_3+\mathbf{e_4}X_4-\mathbf{e_5}X_5-\mathbf{e_6}X_6+\mathbf{e_7}X_7; \\
    Z_4  =\mathbf{e_0}X_0+\mathbf{e_1}X_1+\mathbf{e_2}X_2+\mathbf{e_3}X_3-\mathbf{e_4}X_4-\mathbf{e_5}X_5-\mathbf{e_6}X_6-\mathbf{e_7}X_7; \\
    Z_5  =\mathbf{e_0}X_0-\mathbf{e_1}X_1+\mathbf{e_2}X_2-\mathbf{e_3}X_3-\mathbf{e_4}X_4+\mathbf{e_5}X_5-\mathbf{e_6}X_6+\mathbf{e_7}X_7; \\
    Z_6  =\mathbf{e_0}X_0+\mathbf{e_1}X_1-\mathbf{e_2}X_2-\mathbf{e_3}X_3-\mathbf{e_4}X_4-\mathbf{e_5}X_5+\mathbf{e_6}X_6+\mathbf{e_7}X_7; \\
    Z_7  =\mathbf{e_0}X_0-\mathbf{e_1}X_1-\mathbf{e_2}X_2+\mathbf{e_3}X_3-\mathbf{e_4}X_4+\mathbf{e_5}X_5+\mathbf{e_6}X_6-\mathbf{e_7}X_7.
      \end{dcases}
  \end{equation*}
\end{remark}

\begin{remark}
Here, we interpret Theorem \ref{thm:310}. In the context of complex analysis, we consider the function $$f:\C\to \C,$$
using two different systems: the real coordinate system $x,y$ and the complex coordinate system $z,\bar{z}$
which are respectively suitable for different environments.
In the case of Hermitian analysis, we consider the function $$f:\R^8\to \O\otimes C\ell_8$$
and two coordinate systems $\{X_i\}_{i=0}^7$ and $\{Z_i\}_{i=0}^7$.
The Hermitian variables correspond to the complex coordinate system in complex analysis,
thus Hermitian analysis corresponds to complex analysis.
\end{remark}
At last of this section, we point out that the twistor vectors   $X_i$ are orthogonal.
  $$(X_i,X_j)=\abs{X}^2\delta_{ij}.$$
Moreover,  the twistor vectors   $X_i$ generate the Clifford algebra $C\ell_{8}$.

  \begin{lemma}\label{eq:cliffod-twistor} For any $i,j=0,1,\ldots,7$, we have
\begin{equation}\label{X}
  X_iX_j+X_jX_i =-2\abs{X}^2\delta_{ij}
\end{equation}
\end{lemma}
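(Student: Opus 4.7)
The strategy is to reduce the claimed Clifford identity to an orthogonality statement for the inner product on $\mathbb{R}^8$. Since each $X_i$ lies in $\mathbb{R}^8\subset C\ell_8$ by construction (because $\Phi^{-1}$ lands in $\mathbb{R}^8$), the defining relation $g_ig_j+g_jg_i=-2\delta_{ij}$ of $C\ell_8$ polarizes to the identity
\[
vw+wv=-2(v,w)
\]
for any vectors $v,w\in\mathbb{R}^8$. Consequently, once we know the inner product values $(X_i,X_j)=|X|^2\delta_{ij}$, the conclusion $X_iX_j+X_jX_i=-2|X|^2\delta_{ij}$ follows immediately. So the only real content of the lemma is this orthogonality relation.

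To establish it, I would use that $\Phi:\mathbb{R}^8\to\mathbb{O}$ is an isometry, rewriting
\[
(X_i,X_j)=(\Phi(X_i),\Phi(X_j))=\bigl(\Phi(X)\overline{\mathbf{e_i}},\,\Phi(X)\overline{\mathbf{e_j}}\bigr).
\]
Here I invoke the classical polarization identity on $\mathbb{O}$: since $\mathbb{O}$ is a normed algebra, $|ab|^2=|a|^2|b|^2$, and polarizing in the second argument with $a$ fixed yields
\[
(ab,ac)=|a|^2\,(b,c)\quad\text{for all }a,b,c\in\mathbb{O}.
\]
Applying this with $a=\Phi(X)$, $b=\overline{\mathbf{e_i}}$, $c=\overline{\mathbf{e_j}}$ and noting $(\overline{\mathbf{e_i}},\overline{\mathbf{e_j}})=\delta_{ij}$ and $|\Phi(X)|=|X|$, I obtain $(X_i,X_j)=|X|^2\delta_{ij}$.

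Combining the two steps gives
\[
X_iX_j+X_jX_i=-2(X_i,X_j)=-2|X|^2\delta_{ij},
\]
which is the claim. The only place where one must be careful is that $\mathbb{O}$ is non-associative, so identities such as $\overline{ab}=\bar b\bar a$ and the polarization $(ab,ac)=|a|^2(b,c)$ should be used, while avoiding any manipulation that would require full associativity. This is the main (and quite mild) obstacle: the bilinear identity above is valid in every composition algebra and does not rely on associativity, so the argument goes through without modification. Alternatively, one could verify both identities directly from the explicit coordinate formulas for $X_i$ displayed in the preceding remark, but the isometry-based approach avoids the $64$ case computations and makes transparent the role of the multiplicativity of the octonionic norm.
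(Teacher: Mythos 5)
Your proposal is correct and follows essentially the same route as the paper: both reduce the Clifford relation to the orthogonality $(X_i,X_j)=|X|^2\delta_{ij}$ via the isometry $\Phi$ and a standard composition-algebra identity (you polarize $|ab|=|a||b|$, the paper instead uses $(ab,c)=(b,\overline{a}c)$ together with $\overline{a}(ac)=|a|^2c$ — interchangeable steps). If anything, you make explicit the final polarization $vw+wv=-2(v,w)$ for vectors in $C\ell_8$, which the paper's proof leaves implicit.
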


  \begin{proof}
Indeed, we have
\begin{eqnarray*}
(X_i,X_j)&=& (\Phi(X_i),\Phi(X_j))
\\
&=&(\Phi(X)\overline{e_i},\Phi(X)\overline{e_j})
\\ &=& (\overline{e_i},\overline{\Phi(X)}(\Phi(X)\overline{e_j})) \\
             &=& (\overline{e_i},\abs{\Phi(X)}^2\overline{e_j})
             \\&=&\abs{\Phi(X)}^2\delta_{ij}=\abs{X}^2\delta_{ij}.
\end{eqnarray*}
\end{proof}

\section{Octonionic Hermitian Dirac Operators}
In this section, we define the octonionic Hermitian vector derivatives and octonionic Dirac operators.

For any vector $$X = \sum_{i=0}^7 x_i g_i \in \R^8,$$ its twistor vectors are defined by
$$X_i = \Phi^{-1}(\Phi(X)\overline{\mathbf{e_i}})$$
for $i = 0, \ldots, 7$.

We then define their Fischer dual operators as
$$\partial_{X_i} = \Phi^{-1}(\Phi(\partial_{X})\overline{\mathbf{e_i}}),$$
where the derivative with respect to $X$ is given by
$$\partial_{X} = \sum_{i=0}^7 g_i \partial_{x_i}.$$

\begin{definition}
	The octonionic Hermitian vector derivatives are defined as
	$$\partial_{Z_i} = f_i J_i(\Phi(\partial_{X}))$$
	for $i = 0, \ldots, 7$.
\end{definition}

From Theorem \ref{basis-134} and Lemma \ref{eq:cliffod-twistor}, it follows that
$$\partial_{Z_i} = \sum_{j=0}^7 J_i(\mathbf{e_j})\partial_{X_j},$$
and
\begin{align*}
	\partial_{X_i}\partial_{X_j} + \partial_{X_j}\partial_{X_i} = &-2\Delta\delta_{ij}
\end{align*}
where $$\Delta = \sum_{i=0}^7 \frac{\partial^2}{\partial x_i^2}$$ is the Laplacian operator on $\R^8$.

\section{Witt Bases in $\O\otimes C\ell_{8n}$ and $\O^n\otimes C\ell_{8n}$}
With the help of a finite subgroup in the automorphism of octonion,
we have constructed the Witt basis of  $\O\otimes C\ell_8$.
The same approach makes us   extend the result to the   tensor products  $$\O \otimes C\ell_{8n}, \qquad \O^n \otimes C\ell_{8n},$$
respectively.
\subsection{$\O \otimes C\ell_{8n}$}
First, we consider the space $\O \otimes C\ell_{8n}$. Let $\{g_0, g_1, \ldots, g_{8n-1}\}$ be the orthonormal basis of $\R^{8n}$, and define
$$\Omega_k = \sum_{i=0}^7 \overline{\mathbf{e_i}}g_{8k+i} \in \O\otimes C\ell_{8n}$$
for $k = 0, \ldots, n-1$.

\begin{definition}
	The octonionic Witt basis of $\O\otimes C\ell_{8n}$ is given by
	\begin{equation*}
		\begin{dcases}
			f_0^k &= J_0(\Omega_k), \\
			f_1^k &= J_1(\Omega_k), \\
			&\vdots \\
			f_7^k &= J_7(\Omega_k),
		\end{dcases}
	\end{equation*}
	for $k = 0, \ldots, n-1$, where $\Omega_k$ is as defined above.
\end{definition}

We start  from a generic element $$X=\sum_{i=0}^{8n-1}x_ig_i\in \R^{8n}.$$
There associate $n$-elements
$$X^{j}=\sum_{i=0}^7x_{8j+i}g_{8j+i}$$
for any $j=0,\ldots,n-1$.

Moreover, we have the  Witt decomposition
$$X^j=\frac{1}{8}\sum_{i=0}^7Z_i^j,$$
where
$$ Z_i^j:= f_i^jJ_i(\Phi(X^j)).$$
Consequently,  we  get $$X=\frac{1}{8}\sum_{j=0}^{n-1}\sum_{i=0}^7f_i^jJ_i(\Phi(X^j)).$$
Thus all the processes  in the last section work  in this general case.

\subsection{$\O^n \otimes C\ell_{8n}$}
 We next consider the space $\O^n \otimes C\ell_{8n}$.

 For any  $k=0,\ldots,n$ and $i=0,\ldots,7$, we denote $$\mathbf{e^k_i}=(0,\ldots,0,\mathbf{e_i},0,\ldots,0)\in \O^n,$$ where   $\mathbf{e_i}$ is at the   $k$-th slot.
Let $\{g_0,g_1,\ldots,g_{8n-1}\}$ be the orthonormal basis of $\R^{8n}$ as before.

 We consider
$$\hat{\Omega}_k=\sum_{i=0}^7\overline{\mathbf{e^k_i}}g_{8k+i} \in \O^n\otimes C\ell_{8n} $$
for any $k=0,\ldots,n$.

And we define a set of embedding maps
$$\Phi_k:\R^8\to \O^8, \qquad \forall\ k=0, 1, \ldots, n-1,$$ defined  by
$$\Phi_k\left(\sum_{i=0}^7x_ig_i\right)=\sum_{i=0}^7x_i\mathbf{e^k_i}.$$

\begin{definition}
	The octonionic Witt basis of $\O^n \otimes C\ell_{8n}$ is established as
	\begin{equation}
		\begin{dcases}
			f_0^k &= J_0(\hat{\Omega}_k), \\
			f_1^k &= J_1(\hat{\Omega}_k), \\
			&\vdots \\
			f_7^k &= J_7(\hat{\Omega}_k),
		\end{dcases}
	\end{equation}
	for $k = 0, \ldots, n-1$, where $\hat{\Omega}_k$ is defined as above.
	
	In the same way, our start point is a generic element $$X=\sum_{i=0}^{8n-1}x_ig_i\in \R^{8n}.$$
There associate $n$-elements
$$X^{j}=\sum_{i=0}^7x_{8j+i}g_{8j+i}$$
for any $j=0,\ldots,n-1$.

Moreover, we have the  Witt decomposition
$$X^j=\frac{1}{8}\sum_{i=0}^7Z_i^j,$$
where
$$ Z_i^j:= f_i^jJ_i(\Phi_i(X^j)).$$
Consequently,  we  get $$X=\frac{1}{8}\sum_{j=0}^{n-1}\sum_{i=0}^7f_i^jJ_i(\Phi(X^j)).$$
Thus all the processes  in the last section work  in this general case.
\end{definition}

Given the comprehensive exploration of the Witt basis in the context of octonion and Clifford algebras, the innovative approaches presented in this article pave the way for new mathematical inquiries and applications. The intricate relationship between octonionic automorphisms and the finite $G_2$-subgroup, as elucidated through binary expressions, opens up fertile ground for further research. This work not only extends the existing mathematical framework but also enhances our understanding of the structural complexities within higher-dimensional algebras. The establishment of the octonionic Witt basis and its application to Hermitian Dirac operators significantly broaden the scope of octonionic Hermitian analysis, marking a milestone in the field and encouraging the pursuit of additional groundbreaking studies.

\section*{Conflict of interest statement}
We declare that we have no conflict of interest.

\end{document}